\newenvironment{fxthm}[1]{
\begin{trivlist}
\item {\bf #1}\;
}
{\end{trivlist}}
\newcommand{\darrow}{\!\downarrow}
\newcommand{\uarrow}{\!\uparrow}
\renewcommand{\leq}{\leqslant}
\renewcommand{\geq}{\geqslant}
\newcommand{\fa}{\forall}
\newcommand{\ex}{\exists}
\newcommand{\vph}{\varphi}
\newcommand{\A}{\mathcal{A}}
\newcommand{\K}{\mathcal{K}}
\newcommand{\FPF}{\mathrm{FPF}}
\begin{document}

\mainmatter  

\title{Fixed Point Theorems in Computability Theory}

\titlerunning{Fixed point theorems}

\author{Sebastiaan A. Terwijn
}
\authorrunning{Sebastiaan A. Terwijn} 

\institute{Radboud University Nijmegen\\
Department of Mathematics\\
P.O. Box 9010, 6500 GL Nijmegen\\ 
the Netherlands\\
\email{terwijn@math.ru.nl}
}

\tocauthor{Sebastiaan Terwijn (Radboud University Nijmegen)}

\maketitle
\thispagestyle{plain} 

\begin{abstract}
We give a quick survey of the various fixed point theorems 
in computability theory, partial combinatory algebra, and 
the theory of numberings, as well as generalizations based 
on those. We also point out several open problems connected 
to these.
\end{abstract}




\section{Introduction}

Let $\vph_e$ denote the $e$-th partial computable (p.c.) function. 
Then Kleene's recursion theorem 
simply states that for 
every computable function $f$ there exists a number $e\in\omega$ 
such that 
$$
\vph_{f(e)} = \vph_{e}.
$$
We can think of $e$ as a fixed point of $f$, not literally in the
sense that $f(e)=e$, but at the level of codes of p.c.\ functions. 

The proof of the recursion theorem is very short, and 
it has an air of mystery. 
In Kleene's original paper \cite{Kleene}, which is about ordinal 
notations, it is somewhat hidden at the end of section~2, 
where it only occupies two cryptic lines, but even when 
written out it can be done in three or four lines
(taking the $S$-$m$-$n$-theorem for granted). 
The fact that the proof is not very illuminating is 
perhaps due to the fact that it does not occur 
naturally in the context of p.c.\ functions. Kleene 
found the fixed point theorem in the lambda calculus, 
where it does occur in a natural way, and then translated 
it to computability theory to obtain the recursion theorem.\footnote{
Explanations of the recursion theorem, elaborating on its short 
proof, are given in Owings~\cite{Owings} and Odifreddi~\cite{Odifreddi}.}
The following analogy between lambda calculus and computability
theory may be helpful. To simplify matters we write 
$n\sim m$ for $\vph_n = \vph_m$, and we define (partial) application 
of numbers as $nm = \vph_n(m)$. 
The left part of the following table follows  
Barendregt~\cite[2.1.5]{Barendregt}.
\begin{center}
\begin{tabular*}{\textwidth}[t]{@{\hspace*{1.8cm}}p{5.5cm}l}
{\bf $\lambda$-calculus}  &  {\bf computability} \\[6pt]
$\lambda$-terms & $n\in\omega$ \\[6pt]
$FG$            & $n m = \vph_n(m)$ \\[6pt]
\underline{fixed point theorem:}  &  \underline{recursion theorem:} \\[6pt]
$\fa F \, \ex X \; FX=X$  & $\fa f \, \ex x \; fx \sim x$  \\[6pt]
{\em Proof:} $W = \lambda x. F(xx)$  & {\em Proof:} $bx \sim f(xx)$ \\[6pt]
$WW = F(WW)$ \hspace*{.3cm} $\square$ &  
$bb \sim f(bb)$ \hspace*{.9cm} $\square$ 
\end{tabular*} 
\end{center}
That there exists $b$ such that $bx \sim f(xx)$ 
(i.e.\ $\vph_{\vph_b(x)} = \vph_{f(\vph_x(x))}$)
follows from the $S$-$m$-$n$-theorem.

The recursion theorem is a fundamental result of computability theory
that has found many applications, even extending beyond pure 
computability theory, e.g.\ in set theory. 
The first application was to develop a theory of constructive 
ordinals, for which the recursion theorem is indispensable.
The theorem and its many applications were excellently reviewed by 
Kleene's student Yiannis Moschovakis in \cite{Moschovakis2010}.
The present quick survey is by no means intended to replace
that much larger one, but we have a slightly different focus, 
especially with the view from combinatory algebra, and we 
will also discuss more recent results. 
We will not be including any proofs (apart from the proof 
above of the recursion theorem itself, and some short arguments), 
but mainly pointers to the literature. 
We will also not discuss applications, 
which is the main focus of~\cite{Moschovakis2010}.

Our notation for basic notions in computability theory is mostly 
standard. 
As already mentioned, $\vph_e$ denotes the $e$-th partial computable
(p.c.) function, in some standard numbering of the p.c.\ functions.
P.c.\ functions are denoted by lower case
Greek letters, and (total) computable functions by lower case
Roman letters.
$\omega$ denotes the natural numbers.
$W_e$ denotes the $e$-th computably enumerable (c.e.) set, which is 
defined as the domain of $\vph_e$. 
We write $\vph_e(n)\darrow$ if this computation is defined,
and $\vph_e(n)\uarrow$ otherwise.
$\emptyset'$ denotes the halting set.
For unexplained notions we refer to Odifreddi~\cite{Odifreddi} or
Soare~\cite{Soare}.
Our presentation of partial combinatory algebra follows 
van Oosten~\cite{vanOosten}.

\section{The second recursion theorem}\label{sec:srt}

The simple version of the recursion theorem stated at the 
beginning of this paper is fully effective, which means 
that we can compute the fixed point $e$ effectively from 
a code of~$f$. One way to state this is as follows:
Let $h(x,n)$ be a computable binary function.
By the recursion theorem, for every choice of $n$ there 
exists $e$ such that $\vph_{e} = \vph_{h(e,n)}$, and by 
Skolemization we can see $e$ as a function $f(n)$ of~$n$. 
Now the effectiveness means that we can choose $f$ to be 
{\em computable\/}, so that 
$$
\vph_{f(n)} = \vph_{h(f(n),n)}
$$
for every~$n$.
This is called the recursion theorem with parameters, or 
the {\em second\/} recursion theorem.\footnote{
The term second recursion theorem is from Rogers, and 
advocated by Moschovakis. 
Confusingly, Kleene called this form the first recursion theorem, 
a term which is now mostly used to refer to the simple version 
without parameters (also following Rogers).}
Another way to phrase this is as follows:\footnote{
The two forms of the recursion theorem with parameters are 
equivalent because the numbering $n\mapsto \vph_n$ of the 
p.c.\ functions is {\em precomplete}, cf.\ footnote~\ref{ft}.}

\begin{theorem}{\rm (The second recursion theorem, Kleene \cite{Kleene})}
\label{recthm2}
There exists a computable function~$f$
such that for every~$n$, if $\vph_n(f(n))\darrow$ then
$$
\vph_{\vph_n(f(n))} = \vph_{f(n)}.
$$
\end{theorem}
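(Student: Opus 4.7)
The plan is to mimic, with uniformity in the parameter~$n$, the three-line proof given for the unparametrized version at the end of Section~1, and then extract $f(n)$ as the analogue of the diagonal value $bb$.

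First I would apply the $S$-$m$-$n$-theorem to the partial computable expression $\vph_{\vph_n(\vph_x(x))}(y)$, viewed as a function of $n,x,y$. This yields a total computable function $h(n,x)$ with
\[
\vph_{h(n,x)}(y) \,\simeq\, \vph_{\vph_n(\vph_x(x))}(y),
\]
so that $h(n,x)$ is an index for the p.c.\ function $y \mapsto \vph_{\vph_n(\vph_x(x))}(y)$. Applying $S$-$m$-$n$ a second time to the (total) function $(n,x)\mapsto h(n,x)$, I obtain a total computable function $b$ such that $\vph_{b(n)}(x)=h(n,x)$ for all $n,x$.

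Next I would set $f(n) := h(n,b(n))$, which is total computable in $n$ since both $h$ and $b$ are total computable. The analogue of ``$bb \sim f(bb)$'' is the chain of identities
\[
\vph_{f(n)}(y) \,=\, \vph_{h(n,b(n))}(y) \,\simeq\, \vph_{\vph_n(\vph_{b(n)}(b(n)))}(y) \,=\, \vph_{\vph_n(h(n,b(n)))}(y) \,=\, \vph_{\vph_n(f(n))}(y),
\]
where the middle equivalence uses the defining property of $h$ together with $\vph_{b(n)}(b(n))=h(n,b(n))=f(n)$.

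Finally I would carefully read off convergence from this equivalence: $\vph_{f(n)}(y)\darrow$ requires $\vph_n(f(n))\darrow$ together with $\vph_{\vph_n(f(n))}(y)\darrow$, so whenever $\vph_n(f(n))\darrow$ both sides have the same domain and the same values, giving $\vph_{f(n)} = \vph_{\vph_n(f(n))}$ as required. The only point needing real attention is this convergence bookkeeping, since the two uses of $S$-$m$-$n$ are routine; in particular, $f(n)$ is unconditionally total even though the fixed-point equation is only asserted under the hypothesis $\vph_n(f(n))\darrow$.
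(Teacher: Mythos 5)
Your proof is correct and is essentially the standard argument: it is exactly the paper's three-line diagonal proof ($bx\sim f(xx)$, $bb\sim f(bb)$) with the parameter $n$ threaded through via a second application of the $S$-$m$-$n$-theorem, and your convergence bookkeeping at the end is handled properly (in particular, $\vph_{b(n)}(b(n))=h(n,b(n))=f(n)$ always converges, so the only possible divergence comes from $\vph_n(f(n))$, which is exactly what the hypothesis excludes). The paper states Theorem~\ref{recthm2} without proof, but your uniformization is the intended one and there is no gap.
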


\section{Partial combinatory algebra}

Partial combinatory algebra was first introduced in the literature
by Feferman~\cite{Feferman} as an abstract axiomatic model of computation, 
though the concept had been known and discussed before. 
A {\em partial combinatory algebra\/} (pca) is a set $\A$ with 
a partial application operator $\cdot$ from $\A\times \A$ to $\A$. 
Instead of $a\cdot b$ we often simply write $ab$. We write 
$ab\darrow$ if this is defined. 
By convention application associates to the left, so $abc$ should 
be read as $(ab)c$.
We call $f\in\A$ {\em total\/} if $fa\darrow$ for every~$a$.
For terms (i.e.\ expressions built from elements of $\A$, variables, 
and application) $t$ and $s$ we write $t\simeq s$ if either both sides are 
undefined, or defined and equal.
The defining property of a pca
is that it should be {\em combinatory complete\/}, that is, 
for any term $t(x_1,\ldots,x_n,x)$, $n\geq 0$, 
there exists a $b{\in} \A$ such that
for all $a_1,\ldots,a_n,a{\in} \A$,
\begin{enumerate}[\rm (i)]

\item $ba_1\cdots a_n\darrow$,

\item $ba_1\cdots a_n a \simeq t(a_1,\ldots,a_n,a)$.

\end{enumerate}
Combinatory completeness is equivalent to the existence of the 
combinators $s$ and $k$, familiar from combinatory algebra, 
cf.\ van Oosten~\cite{vanOosten}.

Feferman proved the following version of the recursion theorem
in pcas:

\begin{theorem} {\rm (Feferman \cite{Feferman})}
\label{Feferman}
Let $\A$ be a pca. 
\begin{enumerate}[\rm (1)]

\item There exists $f{\in} \A$ such that for all $g{\in} \A$, 
$g(fg) \simeq fg$.

\item There exists a total $f{\in} \A$ such that  
$g(fg)a \simeq fga$ for every $g$ and $a{\in} \A$.

\end{enumerate}
\end{theorem}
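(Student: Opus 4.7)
The plan is to mimic the $\lambda$-calculus fixed point construction $(\lambda x.\,g(xx))(\lambda x.\,g(xx))$ tabulated in Section~1, with combinatory completeness playing the role of $\lambda$-abstraction. The delicate point is that application is now partial, so I have to check definedness at every step.

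For (1), I first apply combinatory completeness to the term $t(g,x)=g(xx)$ to obtain $w{\in}\A$ with $wg\darrow$ for all $g$ and $wgx\simeq g(xx)$ for all $g,x$. Applying combinatory completeness once more, this time to the term $wg(wg)$ in the single variable $g$, yields $f{\in}\A$ such that $fg\simeq wg(wg)$ for all $g$. Substituting $x:=wg$ in the defining equation of $w$ gives $wg(wg)\simeq g(wg(wg))$, and hence $fg\simeq wg(wg)\simeq g(wg(wg))\simeq g(fg)$, as required.

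For (2), totality forces one further level of abstraction, so that $fg$ is genuinely defined rather than only partial-equal to something. I apply combinatory completeness to the term $t'(g,x,a)=g(xx)a$ to obtain $v{\in}\A$ with $vg\darrow$ and $vgx\darrow$ for all $g,x$, and with $vgxa\simeq g(xx)a$ for all $g,x,a$. Clause~(i) then guarantees that $vg(vg)$ is defined for every $g$, since it is $vg$ applied to the defined element $vg$, and $vgy\darrow$ for every $y$. A final application of combinatory completeness to $vg(vg)$ in the variable $g$ produces $f{\in}\A$ with $fg\simeq vg(vg)$; since the right-hand side is defined for all $g$, the element $f$ is total. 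Unwinding then gives $fga\simeq vg(vg)a\simeq g(vg(vg))a\simeq g(fg)a$ for all $g,a$.

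The main obstacle, and the reason part~(2) is not a trivial consequence of part~(1), is precisely this bookkeeping of definedness: the fixed point $fg$ produced in (1) need not converge. Absorbing the extra argument $a$ into the combinatory completeness step circumvents this, because clause~(i) then forces $vg$ and $vg(vg)$ to converge before the last argument is supplied. Everything else is routine unwinding of the defining equations for $w$ and $v$.
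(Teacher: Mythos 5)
Your proof is correct, and it is the standard argument: it is exactly the construction sketched in the paper's introductory table ($bx\simeq g(xx)$, then $bb\simeq g(bb)$), transplanted to pcas with the definedness bookkeeping handled properly via clause (i) of combinatory completeness. The paper itself gives no proof of this theorem (it only cites Feferman), but your treatment — in particular using the extra argument $a$ in part (2) to force $vg(vg)\darrow$ and hence totality of $f$ — is the intended one and is carried out without gaps.
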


The prime example of a pca is $\omega$, with application 
$nm = \vph_n(m)$ as already defined above. 
From this we immediately recognize Theorem~\ref{Feferman}~(2) 
as a generalization of Theorem~\ref{recthm2}.
However, there is a rich variety of other examples of pcas, 
drawing from lambda calculus, constructive mathematics, 
realizability, and computability theory. 
Examples and references may be found for example in 
van Oosten~\cite{vanOosten}, 
Cockett and Hofstra \cite{CockettHofstra},    
Longley and Normann~\cite{LongleyNormann}, and 
Golov and Terwijn~\cite{GolovTerwijn2023}.

\section{The theory of numberings}

A {\em numbering\/} of a set $S$ is a surjection
$\gamma\colon\omega\rightarrow S$. 
For every numbering we have an equivalence relation on $\omega$ 
defined by $n \sim_\gamma m$ if $\gamma(n) = \gamma(m)$. 
A numbering $\gamma$ is {\em precomplete\/} if for every p.c.\ 
function $\psi$ there exists a computable function $f$
such that for every~$n$
\begin{equation} \label{precomplete}
\psi(n)\darrow \; \Longrightarrow \; f(n) \sim_\gamma \psi(n). 
\end{equation}
Following Visser, we say that 
{\em $f$ totalizes $\psi$ modulo~$\sim_\gamma$\/}.

Ershov~\cite{Ershov2} proved that the recursion theorem holds for every 
precomplete numbering: If $\gamma$ is precomplete, then for every 
computable function $f$ there exists $e\in\omega$ 
such that 
\begin{equation}\label{Ershov1}
f(e) \sim_\gamma e.
\end{equation}
The recursion theorem is obtained from this by simply taking 
the numbering $n\mapsto\vph_n$ of the p.c.\ functions. 
This numbering is easily seen to be precomplete
by the $S$-$m$-$n$-theorem.

%

As for the recursion theorem, we have a version of 
Ershov's recursion theorem with parameters, which shows 
that the theorem is effective. 
The following formulation from Andrews, Badaev, and Sorbi~\cite{ABS}
is completely analogous to Theorem~\ref{recthm2} above.\footnote{
An alternative way to state Ershov's recursion theorem is:
For every computable function $h(x,n)$ there is a computable 
function $f$ such that for all $n$, 
$$
f(n)\sim_\gamma h(f(n),n).
$$
The two forms are equivalent {\em for precomplete numberings}, 
see the discussion in 
Barendregt and Terwijn \cite[Section 3]{BarendregtTerwijn}.
Question~3.4 there asks if for arbitrary numberings $\gamma$ 
the equivalence implies that $\gamma$ is precomplete. 
This question is still open. A partial answer was obtained in 
\cite{GolovTerwijn2022}, where it was shown that the answer for 
the relativized version of this question is negative.\label{ft}}

\begin{theorem} \label{Ershov}
{\rm (Ershov's recursion theorem \cite{Ershov2})}
Let $\gamma$ be a precomplete numbering.
There exists a computable function~$f$
such that for every~$n$, if $\vph_n(f(n))\darrow$ then
$$
\vph_n(f(n)) \sim_\gamma f(n).
$$
\end{theorem}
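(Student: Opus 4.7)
The plan is to mimic the classical Kleene diagonal argument, with precompleteness of $\gamma$ replacing the strict equality of the usual recursion theorem and absorbing the case where $\vph_n(f(n))$ might diverge.

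First I would define the partial computable function
$$\psi(e, n) \simeq \vph_n\big(\vph_e(e, n)\big),$$
where $\vph_e$ is viewed as a binary p.c.\ function via a standard pairing on $\omega$. Since $\psi$ is p.c., precompleteness of $\gamma$ (applied through the pairing) yields a total computable function $t(e, n)$ such that $t(e, n) \sim_\gamma \psi(e, n)$ whenever $\psi(e, n)\darrow$. By the $S$-$m$-$n$-theorem there is an index $e_0$ with $\vph_{e_0}(e, n) = t(e, n)$ for all $e, n$. Finally I would set
$$f(n) \;=\; t(e_0, n) \;=\; \vph_{e_0}(e_0, n),$$
which is total computable in~$n$.

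The verification is then routine bookkeeping. For any $n$,
$$\vph_n\big(f(n)\big) \;=\; \vph_n\big(\vph_{e_0}(e_0, n)\big) \;=\; \psi(e_0, n),$$
so if $\vph_n(f(n))\darrow$ then $\psi(e_0, n)\darrow$, and the defining property of the totalizer $t$ gives
$$f(n) \;=\; t(e_0, n) \;\sim_\gamma\; \psi(e_0, n) \;=\; \vph_n(f(n)),$$
as required.

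The only real subtlety is the diagonalization step: taking $e_0$ to be an index for the totalizer $t$ itself is what forces the substitution $\vph_{e_0}(e_0, n) = t(e_0, n)$, which is precisely the self-reference that couples $t$ to the fixed-point equation through $\psi$. Apart from this, and the routine care needed to encode a binary partial function in order to apply precompleteness (stated for unary p.c.\ functions), the argument is just the parametric analogue of the classical Kleene diagonal sketched in the table in the introduction.
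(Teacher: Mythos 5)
Your proof is correct: it is the standard diagonal argument for Ershov's recursion theorem with parameters, in which the totalizer supplied by precompleteness plays the role that the $S$-$m$-$n$-theorem plays in the proof of Kleene's theorem, and it is exactly the parametrized analogue of the sketch $bx\sim f(xx)$, $bb\sim f(bb)$ given in the introduction. The paper itself gives no proof of this theorem (it defers to Ershov), so there is nothing to contrast with; the one point worth stating explicitly in a written-up version is the routine reduction of binary precompleteness to the unary definition via pairing, which you already flag.
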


In order to combine the theorems of Feferman and Ershov, we 
consider {\em generalized numberings\/} 
$\gamma\colon \A \rightarrow S$, having as a base a pca $\A$ 
instead of $\omega$. We call such numberings 
{\em precomplete\/}\footnote{
In \cite{BarendregtTerwijn} precompleteness was defined using terms 
instead of elements $b\in\A$, but the two definitions are 
equivalent by \cite[Lemma 6.4]{BarendregtTerwijn}.}
if every $b\in\A$ can be totalized modulo~$\sim_\gamma$, 
similarly to the definition of precompleteness 
\eqref{precomplete} for ordinary numberings, namely if 
for every $b\in\A$
there exists a total element $f\in \A$ such that
for all $a\in \A$,
\begin{equation*} 
ba\darrow \; \Longrightarrow \; fa \sim_\gamma ba. 
\end{equation*}

\begin{theorem} {\rm (Barendregt and Terwijn \cite{BarendregtTerwijn})} 
\label{recthmpca}
Suppose $\A$ is a pca, and that
$\gamma\colon \A \rightarrow S$ is a precomplete generalized numbering.
Then there exists a total $f\in \A$ such that for all $g\in \A$,
if $g(fg)\darrow$ then
$$
g(fg) \sim_\gamma fg.
$$
\end{theorem}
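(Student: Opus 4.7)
The plan is to carry out Curry's fixed-point construction inside the pca $\A$, mimicking the proof of Feferman's Theorem~\ref{Feferman}, and to exploit the precompleteness of $\gamma$ to totalize the naive partial fixed point so that the resulting element lives in $\A$ and still satisfies the fixed-point equation modulo~$\sim_\gamma$.

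First, by combinatory completeness I would fix a total $h\in\A$ with $hg\darrow$ and $(hg)x\simeq g(xx)$ for all $g,x\in\A$; this is the pca realization of Curry's combinator $\lambda g.\lambda x.\,g(xx)$, and it formally satisfies $(hg)(hg)\simeq g((hg)(hg))$. Taking $fg:=(hg)(hg)$ directly fails because the right-hand side need not be defined.

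The crucial step is to totalize $hg$ uniformly in $g$. For this I would invoke the precompleteness of $\gamma$ in its uniform, term-level formulation---equivalent to the element-level definition of the excerpt by Lemma~6.4 of \cite{BarendregtTerwijn} mentioned in the footnote---applied to the two-variable term $g(xx)$ in parameter $g$ and argument $x$. This yields a total $u\in\A$ such that $ug$ and $(ug)x$ are defined for all $g,x\in\A$, with $(ug)x\sim_\gamma g(xx)$ whenever $g(xx)\darrow$. In particular $ug$ is total, so $(ug)(ug)$ is always defined; combinatory completeness then produces a total $f\in\A$ with $fg\simeq(ug)(ug)$. To conclude: if $g(fg)\darrow$, i.e.\ $g((ug)(ug))\darrow$, then specializing the totalization property at $x:=ug$ gives $(ug)(ug)\sim_\gamma g((ug)(ug))$, which is exactly $fg\sim_\gamma g(fg)$.

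The hardest step is the uniformity of $u$ in the second step: the precompleteness definition quoted in the excerpt supplies a totalizer for each $b\in\A$ separately, whereas the argument above needs a single $u\in\A$ totalizing the family $\{\,hg : g\in\A\,\}$ simultaneously. Extracting such a $u$ from pointwise precompleteness is nontrivial and proceeds through the equivalence with the term-level formulation; once $u$ is in hand, the remaining manipulations are routine combinatory algebra.
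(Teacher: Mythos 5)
Your argument is correct and is essentially the proof given in the cited source (the survey itself only states the theorem with a reference to Barendregt and Terwijn): realize Curry's term $g(xx)$ by combinatory completeness, totalize it uniformly in $g$ via the term-level formulation of precompleteness, and self-apply the totalized element. You also correctly flag the only delicate point, namely passing from the element-level definition of precompleteness to a totalizer uniform in the parameter $g$, which is exactly what the cited equivalence lemma (via pairing in the pca) provides.
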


\section{Overview}

In this section we list the various forms of the recursion theorem
discussed so far.
To ease the comparison, we write them as succinctly as possible.

First consider the natural numbers $\omega$ as a pca, 
with application $nm = \vph_n(m)$.

\begin{fxthm}{Kleene 1}
$\fa n \, \ex m \, \fa a \; (nma \simeq ma)$.
\end{fxthm}

\begin{fxthm}{Kleene 2}
$\ex f \text{ total } \fa n \, \fa a \; (n(fn)a \simeq fna)$.
\end{fxthm}

Let $\gamma:\omega \rightarrow S$ be a precomplete numbering.

\begin{fxthm}{Ershov}
$\ex f \text{ total } \fa n \; (n(fn)\darrow \Longrightarrow n(fn) \sim_\gamma fn)$.
\end{fxthm}

Let $\A$ be a pca.

\begin{fxthm}{Feferman 1}
$\ex f \fa g \; (g(fg) \simeq fg)$.
\end{fxthm}

\begin{fxthm}{Feferman 2}
$\ex f \text{ total } \fa g \, \fa a \; (g(fg)a \simeq fga)$.
\end{fxthm}

Let $\gamma:\A \rightarrow S$ be a precomplete generalized numbering.

\begin{fxthm}{BT}
$\ex f \text{ total } \fa g \; (g(fg)\darrow \Longrightarrow g(fg) \sim_\gamma fg)$.
\end{fxthm}

We have the following relations between these.

\medskip\noindent
{\bf Kleene 2} $\Rightarrow$ {\bf Kleene 1}: This is obvious, since 
$fn$ provides the fixed point.

\medskip\noindent
{\bf Ershov} $\Rightarrow$ {\bf Kleene 2}: The numbering $\gamma: n\mapsto\vph_n$ is
precomplete by the $S$-$m$-$n$-theorem, and
$n(fn) \sim_\gamma fn$ iff $\fa a \; (n(fn)a \simeq fna)$.

\medskip\noindent
{\bf Feferman 2} $\Rightarrow$ {\bf Kleene 2}: Immediate from the fact that 
$\omega$ is a pca. 

\medskip\noindent
{\bf BT} $\Rightarrow$ {\bf Ershov}: This is trivial.

\medskip\noindent
{\bf BT} $\Rightarrow$ {\bf Feferman 2}: 
Let $a \sim_e b$ if $\fa x{\in}\A (ax \simeq bx)$.
Then the natural map $\gamma: \A \rightarrow \A/{\sim_e}$ is a 
precomplete generalized numbering by 
\cite[Proposition 4.2]{BarendregtTerwijn2022}.
Applying BT to this numbering gives Feferman 2.

\medskip\noindent
{\bf Feferman 1} by itself is very weak, and does not directly imply anything.

\medskip
The implications above are summarized in Figure~\ref{fig}.

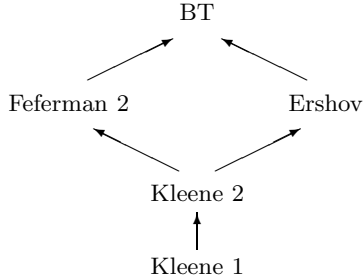
\begin{figure}[h]
\begin{center}
\setlength{\unitlength}{.8mm}
\begin{picture}(24, 45)
\put(12,0){\makebox[0cm][c]{Kleene 1}}
\put(12,4){\vector(0,2){6}}
\put(12,12){\makebox[0cm][c]{Kleene 2}}
\put(9,17){\vector(-2,1){14}}
\put(15,17){\vector(2,1){14}}
\put(-6,27){\makebox[-.5cm][c]{Feferman 2}}
\put(-6,32){\vector(2,1){14}}
\put(30,27){\makebox[.5cm][c]{Ershov}}
\put(30,32){\vector(-2,1){14}}
\put(12,42){\makebox[0cm][c]{BT}}
\end{picture}
\end{center}
\caption{The relation between various forms of the recursion theorem\label{fig}}
\end{figure}

\section{Fixed point free functions and Arslanov's completeness criterion}

Recall that $W_n$ is the $n$-th c.e.\ set.
A function $f$ is called {\em fixed point free}, or simply FPF,
if $W_{f(n)}\neq W_n$ for every~$n$.
Note that by the recursion theorem no FPF function is computable.
We will also consider {\em partial\/} functions without fixed points.
Extending the above definition, we call a partial function $\delta$
FPF if for every $n$,
\begin{equation}
\delta(n)\darrow \; \Longrightarrow \; W_{\delta(n)}\neq W_n. 
\end{equation}
Below, by FPF function we will always mean a total function,
unless explicitly stated otherwise.

The standard tool in computability theory to measure the complexity of sets 
is the notion of a Turing reduction. Informally, $B\leq_T A$ means that $A$
can compute $B$. 
Here we are interested in the complexity of computing FPF functions.
The following fact is well-known:

\begin{proposition} {\rm (Jockusch et al.\ \cite{Jockuschetal})} \label{equivalence}
The following are equivalent for any set~$A$:
\begin{enumerate}[\rm (i)]

\item $A$ computes a $\FPF$ function,

\item $A$ computes a function $h$ such that
$\vph_{h(e)} \neq \vph_e$ for every~$e$.

\end{enumerate}

\end{proposition}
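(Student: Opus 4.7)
The implication (i) $\Rightarrow$ (ii) is essentially a bookkeeping observation: if $f$ is $\FPF$, then $W_{f(e)}\ne W_e$ forces $\mathrm{dom}(\vph_{f(e)})\ne\mathrm{dom}(\vph_e)$, so a fortiori $\vph_{f(e)}\ne\vph_e$, and hence one can simply take $h=f$.

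For the harder direction (ii) $\Rightarrow$ (i), suppose $A$ computes a total $h$ with $\vph_{h(e)}\ne\vph_e$ for every~$e$. My plan is to lift the function-level disagreement to a domain-level disagreement by encoding p.c.\ functions as c.e.\ graphs. Using the $S$-$m$-$n$ theorem, I fix a computable $c$ with
$$W_{c(e)}=\bigset{\la x,y\ra : \vph_e(x)\darrow=y},$$
so that $W_{c(e)}$ is the graph of $\vph_e$, together with a computable $q$ such that $\vph_{q(n)}(x)$ outputs the first $y$ for which $\la x,y\ra$ gets enumerated into $W_n$, and is undefined if no such $y$ ever appears. Then $\vph_{q(n)}$ is automatically single-valued, $\mathrm{graph}(\vph_{q(n)})\subseteq W_n$, and equality holds precisely when $W_n$ is itself (the code of) a graph. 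I then define the $A$-computable function $f(n)=c(h(q(n)))$, so that $W_{f(n)}=\mathrm{graph}(\vph_{h(q(n))})$.

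To verify that $W_{f(n)}\ne W_n$ I split on whether $W_n$ is a graph. If it is, then $W_n=\mathrm{graph}(\vph_{q(n)})$, and the hypothesis on $h$ gives $\vph_{h(q(n))}\ne\vph_{q(n)}$; since distinct p.c.\ functions have distinct graphs, $W_{f(n)}\ne W_n$. If $W_n$ is not a graph it contains two pairs sharing a first coordinate, while $W_{f(n)}$ is a graph by construction, so again $W_{f(n)}\ne W_n$. The main obstacle is conceptual: the assumption $\vph_{h(e)}\ne\vph_e$ can be witnessed purely by a value-disagreement on a common domain, which yields no domain inequality on its own; the graph-encoding trick overcomes this by absorbing all of a function's value information into the \emph{domain} of a new c.e.\ set, so that every $\vph$-inequality is automatically converted into a $W$-inequality.
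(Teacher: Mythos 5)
Your proof is correct, but for the substantive direction (ii)$\Rightarrow$(i) it takes a genuinely different route from the paper. You convert a $\vph$-disagreement into a $W$-disagreement by pushing all value information into the domain: the map $e\mapsto c(e)$ with $W_{c(e)}=\mathrm{graph}(\vph_e)$, together with the ``decoding'' map $q$ with $\mathrm{graph}(\vph_{q(n)})\subseteq W_n$ (with equality exactly when $W_n$ is single-valued), and then $f=c\circ h\circ q$; the case split on whether $W_n$ is a graph is exactly what is needed, and both cases check out. The paper instead argues by a selector-plus-totalization diagonalization: take a p.c.\ $\psi$ with $\psi(e)\in W_e$ whenever $W_e\neq\emptyset$, totalize it to a computable $p$ with $\vph_{p(e)}=\vph_{\psi(e)}$ when $\psi(e)\darrow$, and set $W_{f(e)}=\{h(p(e))\}$; if $W_{f(e)}=W_e$ then $\psi(e)\darrow=h(p(e))$, forcing $\vph_{h(p(e))}=\vph_{p(e)}$, contradicting (ii). The paper's argument is shorter and needs no coding of graphs, but it is the more ``magical'' of the two, reusing the totalization trick that underlies the recursion theorem itself; your graph-encoding argument is longer but conceptually transparent, exhibiting the p.c.\ functions as a computable retract of the c.e.\ sets on which $\vph$-equality and $W$-equality coincide, so that any $\vph$-diagonal function is automatically a $W$-diagonal function. (One pedantic point: your claim that a non-graph $W_n$ must contain two pairs with the same first coordinate presupposes that the pairing function is onto $\omega$; with the standard Cantor pairing this is true, and otherwise the extra case of a non-pair element is handled the same way, since $W_{f(n)}$ consists only of coded pairs.)
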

\begin{proof}
(i)$\Rightarrow$(ii) is trivial. 
We give a direct proof of (ii)$\Rightarrow$(i), avoiding the
detour via DNC functions as in Soare~\cite[p90]{Soare}.

Let $\psi$ be p.c.\ such that $W_e\neq\emptyset \Rightarrow \psi(e)\in W_e$.
Let $p$ totalize $\psi$, i.e.\ $p$ is computable such that
$\vph_{p(e)} = \vph_{\psi(e)}$ for every $e$ with $\psi(e)\darrow$.
(As mentioned before, every p.c.\ function can be totalized in this way.)
Now suppose that $h$ is as in (ii), and let $f$ be $A$-computable such that
$W_{f(e)} = \{h(p(e))\}$.

Suppose that $W_{f(e)} = W_e$. Then $\psi(e)\darrow = h(p(e))$,
hence
$$
\vph_{h(p(e))} = \vph_{\psi(e)} = \vph_{p(e)}, 
$$
contradicting (ii).
Hence $f$ is a FPF function.\qed
\end{proof}

Arslanov (building on earlier work of Martin and Lachlan) 
extended the recursion theorem from computable functions
to functions computable from an incomplete c.e.\ Turing degree.
The Arslanov completeness criterion states that
a c.e.\ set is Turing complete if and only if it
computes a fixed point free function.

\begin{theorem} {\rm (Arslanov completeness criterion \cite{Arslanov})}
\label{Arslanov}
Suppose $A$ is c.e.\ and $A$ is Turing incomplete, i.e.\ $A <_T \emptyset'$.
If $f$ is an $A$-computable function, then $f$ has a fixed point,
i.e.\ an $e\in\omega$ such that $W_{f(e)} = W_e$.
\end{theorem}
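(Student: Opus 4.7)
The plan is to argue by contradiction. Suppose some $A$-computable total function $f$ is FPF (so $W_{f(n)} \neq W_n$ for every $n$); I will derive $\emptyset' \leq_T A$, which contradicts $A <_T \emptyset'$. The set-up exploits the c.e.-ness of $A$ to obtain a well-behaved approximation to $f$: fix a c.e.\ enumeration $\{A_s\}$ of $A$ and a Turing functional with $\Phi^A = f$, and put $f_s(n) := \Phi^{A_s}(n)$, undefined where not yet convergent by stage $s$. Then $f_s(n) \to f(n)$, and for every $n$ there is a stabilization stage $\sigma(n)$ beyond which $f_s(n) = f(n)$ permanently. The key point is that $\sigma$ is itself $A$-computable, since $A$ can recognize when $A_s$ has settled on the use of the computation $\Phi^A(n)$.

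The main construction is a diagonalization via the parametrized recursion theorem (Theorem~\ref{recthm2}). For each $n$, I would use the $S$-$m$-$n$-theorem to define a computable index $e(n)$ whose c.e.\ set $W_{e(n)}$ is enumerated by a purely computable procedure that consults the stage-$s$ approximations $\emptyset'_s$ and $f_s(e(n))$. The procedure is designed so that $W_{e(n)}$ faithfully tracks $n$'s behaviour with respect to $\emptyset'$: roughly, if $n$ enters $\emptyset'$ at stage $s$, the procedure begins mirroring the enumeration of $W_{f_s(e(n)),\,s}$ into $W_{e(n)}$, thereby ``chasing'' $W_{f(e(n))}$ once the approximations stabilize; if $n \notin \emptyset'$, $W_{e(n)}$ stays trivial. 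Because $f$ is FPF, $W_{e(n)} \neq W_{f(e(n))}$, so the two sets differ at some least witness. Using oracle $A$, one then computes $f(e(n))$ and $\sigma(e(n))$, enumerates enough of $W_{e(n)}$ and $W_{f(e(n))}$ to locate their disagreement, and reads off $\emptyset'(n)$, yielding $\emptyset' \leq_T A$.

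The hard part is arranging the enumeration of $W_{e(n)}$ so that it is both honestly computable and rigid enough to make the disagreement with $W_{f(e(n))}$ detectable from $A$. Because the stage-$s$ approximation $f_s(e(n))$ may be wrong at early stages and elements enumerated into $W_{e(n)}$ cannot be withdrawn, the diagonalization must be robust against such early errors; the modulus $\sigma$ is exactly what lets $A$ discount them. This is precisely where the c.e.-ness of $A$ is essential: the one-sided convergence $A_s \subseteq A$ gives a clean stabilization of $f_s \to f$ that $A$ can certify, whereas for a general $\Delta^0_2$ oracle this certification is not available and the theorem is known to fail.
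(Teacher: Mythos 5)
The paper itself contains no proof of Theorem~\ref{Arslanov} (it is surveyed with pointers to Arslanov and to Soare), so I compare your sketch against the standard argument from the literature. Your architecture is exactly that standard proof: assume $f=\Phi^A$ is FPF, use the c.e.-ness of $A$ to obtain a computable approximation $f_s\to f$ together with an $A$-computable modulus $\sigma$ (the modulus lemma), build $e(n)$ by the recursion theorem with parameters so that $W_{e(n)}$ begins copying the sets $W_{f_t(e(n)),t}$ only after $n$ has appeared in $\emptyset'_s$, and conclude $\emptyset'\leq_T A$, contradicting incompleteness. All of this is right, and you correctly isolate where c.e.-ness is needed.

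The gap is in the decoding step. You propose that $A$ should ``enumerate enough of $W_{e(n)}$ and $W_{f(e(n))}$ to locate their disagreement, and read off $\emptyset'(n)$.'' This is not an algorithm: certifying that a number lies in one c.e.\ set but not in the other requires deciding non-membership in a c.e.\ set, which is exactly the power of $\emptyset'$ you are trying to show $A$ has, and there is no $A$-computable bound on how much enumeration is ``enough.'' It is also not clear how the location of a disagreement would encode the bit $\emptyset'(n)$. The correct decoding uses the modulus directly and never inspects the sets: $n\in\emptyset'$ if and only if $n\in\emptyset'_{\sigma(e(n))}$. For if $n$ entered $\emptyset'$ only at some stage $s\geq\sigma(e(n))$, then every index $f_t(e(n))$ consulted from stage $s$ onward already equals the true value $f(e(n))$, so the copying is exact and $W_{e(n)}=W_{f(e(n))}$, contradicting the assumption that $f$ is FPF; the converse implication is trivial. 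Since $e$ is computable and $\sigma$ is $A$-computable, this yields $\emptyset'\leq_T A$. With that replacement your argument is the standard proof.
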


Note that Theorem~\ref{Arslanov} implies the recursion theorem
by Proposition~\ref{equivalence}.
Without the requirement that $A$ is c.e.\ the theorem fails, 
as $\FPF$ functions can have low Turing degree 
(i.e.\ $A' \leq_T \emptyset'$) 
by the low basis theorem of Jockusch and Soare~\cite{JockuschSoare1972b}.

The Arslanov completeness criterion  
has been extended in various ways,
by considering relaxations of the type of fixed point.  
For example, instead of requiring that $W_{f(e)} = W_e$, 
we can merely require $W_{f(e)}$ to be a finite variant of $W_e$ (Arslanov), 
or for them to be Turing equivalent (Arslanov),
or for the $n$-th jumps of these sets to be Turing equivalent (Jockusch).
In this way the completeness criterion can be extended to all levels
of the arithmetical hierarchy.
For a discussion of these results we refer the reader to
Soare \cite[p270 ff]{Soare} and
Jockusch, Lerman, Soare, and Solovay~\cite{Jockuschetal}.
The latter paper  
also contains an extension of Theorem~\ref{Arslanov}
from c.e.\ degrees to d.c.e.\ degrees.

\section{Further generalizations}

In this section we discuss several other generalizations 
of the recursion theorem.

Visser proved an extension called the ADN theorem
(for ``anti diagonal normalization theorem''), 
motivated by Rosser's extension of G\"odel's incompleteness theorem.

\begin{theorem} {\rm (ADN theorem, Visser~\cite{Visser})} \label{ADN}
Suppose that $\delta$ is a partial computable fixed point free
function. Then for every partial computable function $\psi$ there
exists a computable function $f$ such that for every $n$,
\begin{align}
\psi(n)\darrow \; &\Longrightarrow \; W_{f(n)}= W_{\psi(n)} \label{totalize} \\
\psi(n)\uarrow \; &\Longrightarrow \; \delta(f(n))\uarrow   \label{avoid}
\end{align}
\end{theorem}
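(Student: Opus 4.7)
The plan is to use the second recursion theorem (Theorem~\ref{recthm2}) together with a race construction whose outcome is forced by the fixed-point-free property of~$\delta$. The guiding idea: arrange for $W_{f(n)}$ to depend on which of $\psi(n)$ and $\delta(f(n))$ halts first, in such a way that the branch where $\delta(f(n))$ wins forces $W_{f(n)} = W_{\delta(f(n))}$, which FPF will then rule out. The self-reference is delivered by the recursion theorem.

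Concretely, using the $S$-$m$-$n$-theorem I would first define a computable binary function $h$ such that $W_{h(e,n)}$ is enumerated as follows. Simulate $\psi(n)$ and $\delta(e)$ in parallel stage by stage, enumerating nothing into $W_{h(e,n)}$ while neither computation has halted. If $\psi(n)\darrow$ at stage $s_1$ and either $\delta(e)\uparrow$ at stage $s_1$ or $\delta(e)$ first halts at some stage $s_2 \geq s_1$ (ties going to $\psi$), then from stage $s_1$ onwards copy the ongoing enumeration of $W_{\psi(n)}$ into $W_{h(e,n)}$, so that $W_{h(e,n)} = W_{\psi(n)}$. If instead $\delta(e)\darrow$ at a stage strictly before $\psi(n)\darrow$, then copy $W_{\delta(e)}$ into $W_{h(e,n)}$, so that $W_{h(e,n)} = W_{\delta(e)}$. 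Then I would apply the parametric form of the recursion theorem (the version in the footnote, which since $n \mapsto \vph_n$ is precomplete is equivalent to Theorem~\ref{recthm2}) to obtain a computable~$f$ with $\vph_{f(n)} = \vph_{h(f(n),n)}$, hence $W_{f(n)} = W_{h(f(n), n)}$.

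For the verification, suppose toward contradiction that for some $n$ either $\psi(n)\uparrow$ while $\delta(f(n))\darrow$, or $\delta(f(n))$ halts strictly before $\psi(n)$ does. In both cases the construction places us in the ``$\delta$ wins'' branch at index $f(n)$, giving $W_{f(n)} = W_{\delta(f(n))}$, which contradicts the FPF property of~$\delta$. Hence neither bad case occurs: $\psi(n)\uparrow$ forces $\delta(f(n))\uparrow$, which is~\eqref{avoid}; and when $\psi(n)\darrow$ we are necessarily in the ``$\psi$ wins'' branch, giving $W_{f(n)} = W_{\psi(n)}$, which is~\eqref{totalize}. The main delicate point will be choosing the tie-breaking so that FPF is exactly what rules out the bad branch; the rest is routine bookkeeping with the $S$-$m$-$n$ and recursion theorems.
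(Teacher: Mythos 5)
Your proposal is correct: the race between $\psi(n)$ and $\delta(e)$ at the self-referential index $e=f(n)$, with the ``$\delta$ wins'' branch forcing $W_{f(n)}=W_{\delta(f(n))}$ and thus being excluded by fixed point freeness, is exactly the standard proof of the ADN theorem. The survey itself gives no proof but defers to Visser~\cite{Visser} and Barendregt--Terwijn~\cite{BarendregtTerwijn}, and your argument (including the tie-breaking in favour of $\psi$ and the use of the parametric recursion theorem to realize the self-reference) matches that argument in all essentials.
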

Note that \eqref{totalize} expresses that $f$ totalizes $\psi$ 
modulo the numbering $n\mapsto W_n$ of the c.e.\ sets.
Also note that the ADN theorem implies the recursion theorem: 
The function $\delta$ cannot be total, for otherwise $f(n)$ could 
not exist when $\psi(n)\uarrow$. 
It follows that there can be no computable FPF function. 
By Proposition~\ref{equivalence} this is equivalent to the 
statement of the recursion theorem.

For discussion about the motivation and applications of the 
ADN theorem we refer the reader to Visser~\cite{Visser} and 
Barendregt and Terwijn~\cite{BarendregtTerwijn}. 
For example, it has interesting applications in the theory 
of ceers (c.e.\ equivalence relations), see 
Bernardi and Sorbi~\cite{BernardiSorbi}. 
For recent results about diagonal functions for ceers see 
Badaev and Sorbi~\cite{BadaevSorbi}.

The following result simultaneously generalizes
Arslanov's completeness criterion (Theorem~\ref{Arslanov}) 
and the ADN theorem. 

\begin{theorem} {\rm (Joint generalization, Terwijn~\cite{Terwijn})} 
\label{joint}
Suppose $A$ is a c.e.\ set such that $A <_T \emptyset'$, 
and suppose that $\delta$ is a partial $A$-computable fixed point free 
function. Then for every partial computable function $\psi$ there 
exists a computable function $f$ totalizing $\psi$ avoiding $\delta$, 
i.e.\ such that for every~$n$, 
\begin{align}
\psi(n)\darrow \; &\Longrightarrow \; W_{f(n)}= W_{\psi(n)} \label{totalize2} \\
\psi(n)\uarrow \; &\Longrightarrow \; \delta(f(n))\uarrow   \label{avoid2}
\end{align}
\end{theorem}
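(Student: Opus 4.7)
The plan is to adapt the proof of the ADN theorem (Theorem~\ref{ADN}) to the setting where $\delta$ is $A$-computable rather than computable, using the incompleteness of $A$ to compensate for the non-monotonicity of the $A$-relative approximations. Fix a computable enumeration $\{A_s\}_s$ of $A$ and a Turing functional $\Phi$ with $\delta \simeq \Phi^A$, and let $\delta_s(x) := \Phi^{A_s}(x)[s]$ denote the computable stage-$s$ approximation. The key point is that $\delta_s(x)\darrow$ does not imply $\delta(x)\darrow$: the computation depends on $A_s\restr u$ for some use $u$, and may be invalidated if $A$ later enumerates an element below~$u$.

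First I would apply Theorem~\ref{recthm2} (the recursion theorem with parameters) to obtain a computable $f$ such that $W_{f(n)}$ is enumerated by the following stage-by-stage race, analogous to Visser's proof of Theorem~\ref{ADN}. At stage~$s$: if $\psi_s(n)\darrow$ with value $m$, commit to the $\psi$-target and enumerate $W_m$ into $W_{f(n)}$ from that stage on; otherwise, if $\delta_s(f(n))\darrow$ with value $e'$, commit to the $\delta$-target and enumerate $W_{e'}$ into $W_{f(n)}$. If $\psi(n)\darrow$ and the $\psi$-target wins the race then condition~\eqref{totalize2} is satisfied; if $\delta(f(n))\darrow$ and the $\delta$-target wins with a \emph{correct} value $e' = \delta(f(n))$, then $W_{f(n)} = W_{\delta(f(n))}$ contradicts $\delta$ being FPF, so the case $\psi(n)\uarrow$ and $\delta(f(n))\darrow$ cannot arise, yielding condition~\eqref{avoid2}.

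The main obstacle is handling a \emph{spurious} $\delta$-commitment: at stage~$s$ one may observe $\delta_s(f(n))\darrow = e'$ and commit to $e'$, only for the computation to be invalidated by a later element entering $A$ below the use. The argument above then breaks down, since $W_{f(n)}$ has been irreversibly polluted with $W_{e'}$ for an incorrect $e'$ and FPF no longer yields a contradiction. To repair this, I would incorporate a permitting refinement into the construction: commit to the $\delta$-target only when the stage-$s$ computation is subsequently confirmed by $A$, using a movable-marker strategy. The incompleteness assumption $A <_T \emptyset'$ is what guarantees that this permitting succeeds; otherwise the pattern of unsuccessful commitments could be decoded into a Turing reduction $\emptyset' \leq_T A$, contradicting the hypothesis, very much in the spirit of the proof of Theorem~\ref{Arslanov}.

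The hardest step is making the permitting argument precise, in particular verifying that markers settle often enough for condition~\eqref{avoid2} to be forced on every $n$ with $\psi(n)\uarrow$. As sanity checks, setting $A = \emptyset$ recovers Visser's proof of the ADN theorem, while taking $\psi$ totally undefined forces the hypothetical FPF function $\delta$ to be nowhere defined, recovering Theorem~\ref{Arslanov}.
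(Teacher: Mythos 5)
Your overall plan---run Visser's race from the proof of Theorem~\ref{ADN} and use $A <_T \emptyset'$, Arslanov-style, to neutralize spurious $\delta$-convergences---is the right one, and it is the strategy of the proof in \cite{Terwijn} (the present survey gives no proof of Theorem~\ref{joint}, only the citation). But the step you defer (``incorporate a permitting refinement \dots\ the hardest step is making the permitting argument precise'') is precisely the content of the theorem beyond its two special cases, and the repair you gesture at cannot work as stated. First, you propose to ``commit to the $\delta$-target only when the stage-$s$ computation is subsequently confirmed by $A$.'' The procedure enumerating $W_{f(n)}$ must be computable, and for c.e.\ $A$ the $A$-correctness of a computation $\Phi^{A_s}_s(f(n))$ is a $\Pi^0_1$ event: it can be refuted at a finite stage but never confirmed at one, so no computable construction can wait for confirmation. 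Second, the damage from a spurious commitment is not repairable afterwards: once elements of $W_{e'}$ for a wrong $e'$ have entered $W_{f(n)}$, condition \eqref{totalize2} is already in jeopardy, since c.e.\ sets cannot be un-enumerated; a movable marker that abandons a candidate does not help for the single index $f(n)$ that the theorem requires to work. Spurious commitments must therefore be prevented from acting at all, not corrected later.

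Third, and most importantly, you have not arranged the construction so that ``the pattern of unsuccessful commitments could be decoded into a reduction $\emptyset' \leq_T A$.'' With one race per $n$ there is nothing to decode. To run the Arslanov argument one needs, for each $n$, an auxiliary family of candidates indexed by potential elements $x$ of $\emptyset'$ (an additional parameter fed through the recursion theorem), set up so that a genuine convergence $\delta(f(n))\darrow$ in the case $\psi(n)\uarrow$ would allow $A$ to compute, for every $x$, a stage after which $x$ can no longer enter $\emptyset'$, contradicting $A <_T \emptyset'$. Equivalently, the $\delta$-attack may only be launched on the back of some $x$ entering $\emptyset'$ after the relevant $A$-use has settled, and the incompleteness of $A$ is exactly what guarantees that such late $x$ exist. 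This extra layer---the place where the proofs of Theorem~\ref{ADN} and Theorem~\ref{Arslanov} must be genuinely interleaved rather than concatenated---is absent from your sketch; see \cite{Terwijn} for how it is carried out. Your two sanity checks at the end are fine, but they only confirm that the theorem implies its two special cases, not that your construction proves it.
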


Note that the statement of Theorem~\ref{joint} is identical 
to that of the ADN theorem, except that $\delta$ is now 
partial $A$-computable for $A$ c.e.\ and incomplete, instead 
of just p.c. 
So Theorem~\ref{joint} generalizes the ADN theorem in the 
same way that Arslanov's completeness criterion generalizes 
the recursion theorem.
Theorem~\ref{joint} implies Arslanov's completeness criterion, 
since in general $f$ as in the theorem cannot satisfy \eqref{avoid2} 
if $\delta$ is total. In particular, any total $A$-computable 
$\delta$ cannot be FPF, hence must have a fixed point. 
\begin{figure}[h]
\begin{center}
\setlength{\unitlength}{1.15mm}
\begin{picture}(24, 35)
\put(12,0){\makebox[0cm][c]{Recursion theorem}}
\put(7,4){\vector(-2,1){12}}
\put(17,4){\vector(2,1){12}}
\put(-6,15){\makebox[-.5cm][c]{\parbox[c]{2cm}{\centering ADN theorem\\ Theorem~\ref{ADN}}}}
\put(-6,20){\vector(2,1){12}}
\put(30,15){\makebox[-.5cm][c]{\parbox[c]{2cm}{\centering 
\; Arslanov \\ \; Theorem~\ref{Arslanov}}}}
\put(30,20){\vector(-2,1){12}}
\put(12,30){\makebox[0cm][c]{\makebox[-.5cm][c]{
\parbox[c]{3cm}{\centering Joint generalization Theorem~\ref{joint}}}}}
\end{picture}
\end{center}
\caption{Generalizations of the recursion theorem\label{fig2}}
\end{figure}
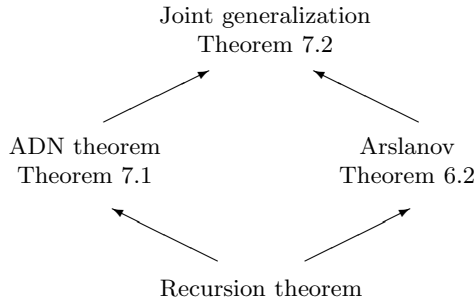

Visser actually proved the ADN theorem for arbitrary precomplete 
numberings, so that the ADN theorem also generalizes 
Ershov's recursion theorem.\footnote{
Not exactly the version with parameters Theorem~\ref{Ershov}, 
but the simpler statement \eqref{Ershov1} without parameters 
stated before it. 
That the ADN theorem with parameters fails was shown in 
Terwijn~\cite{Terwijn2020}.}
Arslanov's completeness criterion also holds for precomplete numberings, 
as was proved by Selivanov~\cite{Selivanov}.
So the obvious question at this point is whether 
this is also true for the joint generalization Theorem~\ref{joint}.
This is currently open (\cite[Question 5.2]{BarendregtTerwijn}).
Since it is true for the two theorems that the joint generalization 
generalizes, the evidence seems to point in the positive direction. 
However, the proof of the joint generalization uses specific 
properties of c.e.\ sets that we do not have in general, so 
that the answer may still be negative.

\section{Effectiveness and other remarks}

We  discuss the extent to which the various fixed point theorems 
discussed above are effective. 
As mentioned in section~\ref{sec:srt}, the recursion theorem 
is effective, which is the content of Theorem~\ref{recthm2}.
On the other hand, 
the proof of Arslanov's Theorem~\ref{Arslanov} does 
not effectively produce a fixed point, but rather an infinite 
c.e.\ set of numbers, at least one of which is a fixed point. 
That this is necessarily the case was discussed in 
Terwijn~\cite{Terwijn2020}, where it was shown that 
Theorem~\ref{Arslanov} indeed is not effective, so that there 
is no version with parameters analogous to Theorem~\ref{recthm2}.
This raises the question of exactly how noneffective 
Theorem~\ref{Arslanov} is. The matter of the complexity of 
the corresponding Skolem functions was discussed in 
Golov and Terwijn \cite{GolovTerwijn2022}, and 
independently in Arslanov~\cite{Arslanov2021}.


We already mentioned that the ADN theorem (Theorem~\ref{ADN}) is 
not effective, although there is uniformity in some of its
parameters. This was shown in \cite{Terwijn2020}. 
Since neither the Arslanov completeness criterion nor the 
ADN theorem are effective, a fortiori the same holds for 
the joint generalization Theorem~\ref{joint}.

It is not known whether the Arslanov completeness criterion 
(appropriately formulated) holds for pcas in general. 
See Terwijn~\cite[Question 10.1]{Terwijn2020b} for a precise 
statement of this.

The role that the recursion theorem plays in the theory of pcas is 
interesting. For example it plays an important part in 
results about {\em embeddings\/} between pcas, 
see Shafer and Terwijn \cite{ShaferTerwijn} and 
Golov and Terwijn \cite{GolovTerwijn2023}.
(Note that \cite{ShaferTerwijn} also contains results about another 
kind of fixed points, namely closure ordinals, but these are of a
different kind than the ones that we have been discussing here.)

We should also mention here the various forms of the recursion 
theorem in descriptive set theory, cf.\ Kechris~\cite[p289]{Kechris} 
and Moschovakis~\cite[p383]{Moschovakis}. 
These are formulated for the various pointclasses $\Gamma$ 
occurring in descriptive set theory (effective or not)  
for which the $\Gamma$-computable functions on Polish spaces 
can be suitably parameterized, e.g.\ by reals in $\omega^\omega$. 
For these an analog of the $S$-$m$-$n$-theorem is available 
(\cite[7A.1]{Moschovakis}), which makes the proof of the 
recursion theorem work.
Note that the idea of encoding continuous functions by reals 
is the same as the basic idea underlying Kleene's pca $\K_2$ 
(cf.\ \cite{vanOosten}).

Finally we mention categorical approaches to the subject of 
diagonalization and fixed point theorems, starting with 
Lawvere~\cite{Lawvere}.
Examples of Lawvere's basic scheme are discussed in 
Yanofsky~\cite{Yanofsky} 
and Bauer~\cite{Bauer}, among others.
Note, however, that these do not capture the more complex
results such as Theorem~\ref{Arslanov} and Theorem~\ref{joint}, 
as these do not follow Lawvere's scheme.

\subsection*{Acknowledgements}

We thank Dan Frumin and Anton Golov for discussions about 
the categorical view on the subject of fixed points.

\end{document}